\title{Equivariant property (SI) revisited, II}
\author{G{\'a}bor Szab{\'o}}
\address{KU Leuven, Department of Mathematics, Celestijnenlaan 200b box 2400, \phantom{-..-}B-3001 Leuven, Belgium}
\email{gabor.szabo@kuleuven.be}
\thanks{This work was supported by internal KU Leuven BOF project C14/19/088 and project G085020N funded by the Research Foundation Flanders (FWO)}
\subjclass[2020]{Primary 46L35, 46L55, 46L40}
\dedicatory{In memory of Eberhard Kirchberg.}
\begin{document}

\newcommand{\IA}[0]{\mathbb{A}} \newcommand{\IB}[0]{\mathbb{B}}
\newcommand{\IC}[0]{\mathbb{C}} \newcommand{\ID}[0]{\mathbb{D}}
\newcommand{\IE}[0]{\mathbb{E}} \newcommand{\IF}[0]{\mathbb{F}}
\newcommand{\IG}[0]{\mathbb{G}} \newcommand{\IH}[0]{\mathbb{H}}
\newcommand{\II}[0]{\mathbb{I}} \renewcommand{\IJ}[0]{\mathbb{J}}
\newcommand{\IK}[0]{\mathbb{K}} \newcommand{\IL}[0]{\mathbb{L}}
\newcommand{\IM}[0]{\mathbb{M}} \newcommand{\IN}[0]{\mathbb{N}}
\newcommand{\IO}[0]{\mathbb{O}} \newcommand{\IP}[0]{\mathbb{P}}
\newcommand{\IQ}[0]{\mathbb{Q}} \newcommand{\IR}[0]{\mathbb{R}}
\newcommand{\IS}[0]{\mathbb{S}} \newcommand{\IT}[0]{\mathbb{T}}
\newcommand{\IU}[0]{\mathbb{U}} \newcommand{\IV}[0]{\mathbb{V}}
\newcommand{\IW}[0]{\mathbb{W}} \newcommand{\IX}[0]{\mathbb{X}}
\newcommand{\IY}[0]{\mathbb{Y}} \newcommand{\IZ}[0]{\mathbb{Z}}

\newcommand{\CA}[0]{\mathcal{A}} \newcommand{\CB}[0]{\mathcal{B}}
\newcommand{\CC}[0]{\mathcal{C}} \newcommand{\CD}[0]{\mathcal{D}}
\newcommand{\CE}[0]{\mathcal{E}} \newcommand{\CF}[0]{\mathcal{F}}
\newcommand{\CG}[0]{\mathcal{G}} \newcommand{\CH}[0]{\mathcal{H}}
\newcommand{\CI}[0]{\mathcal{I}} \newcommand{\CJ}[0]{\mathcal{J}}
\newcommand{\CK}[0]{\mathcal{K}} \newcommand{\CL}[0]{\mathcal{L}}
\newcommand{\CM}[0]{\mathcal{M}} \newcommand{\CN}[0]{\mathcal{N}}
\newcommand{\CO}[0]{\mathcal{O}} \newcommand{\CP}[0]{\mathcal{P}}
\newcommand{\CQ}[0]{\mathcal{Q}} \newcommand{\CR}[0]{\mathcal{R}}
\newcommand{\CS}[0]{\mathcal{S}} \newcommand{\CT}[0]{\mathcal{T}}
\newcommand{\CU}[0]{\mathcal{U}} \newcommand{\CV}[0]{\mathcal{V}}
\newcommand{\CW}[0]{\mathcal{W}} \newcommand{\CX}[0]{\mathcal{X}}
\newcommand{\CY}[0]{\mathcal{Y}} \newcommand{\CZ}[0]{\mathcal{Z}}

\newcommand{\FA}[0]{\mathfrak{A}} \newcommand{\FB}[0]{\mathfrak{B}}
\newcommand{\FC}[0]{\mathfrak{C}} \newcommand{\FD}[0]{\mathfrak{D}}
\newcommand{\FE}[0]{\mathfrak{E}} \newcommand{\FF}[0]{\mathfrak{F}}
\newcommand{\FG}[0]{\mathfrak{G}} \newcommand{\FH}[0]{\mathfrak{H}}
\newcommand{\FI}[0]{\mathfrak{I}} \newcommand{\FJ}[0]{\mathfrak{J}}
\newcommand{\FK}[0]{\mathfrak{K}} \newcommand{\FL}[0]{\mathfrak{L}}
\newcommand{\FM}[0]{\mathfrak{M}} \newcommand{\FN}[0]{\mathfrak{N}}
\newcommand{\FO}[0]{\mathfrak{O}} \newcommand{\FP}[0]{\mathfrak{P}}
\newcommand{\FQ}[0]{\mathfrak{Q}} \newcommand{\FR}[0]{\mathfrak{R}}
\newcommand{\FS}[0]{\mathfrak{S}} \newcommand{\FT}[0]{\mathfrak{T}}
\newcommand{\FU}[0]{\mathfrak{U}} \newcommand{\FV}[0]{\mathfrak{V}}
\newcommand{\FW}[0]{\mathfrak{W}} \newcommand{\FX}[0]{\mathfrak{X}}
\newcommand{\FY}[0]{\mathfrak{Y}} \newcommand{\FZ}[0]{\mathfrak{Z}}

\newcommand{\eins}[0]{\textbf{1}}
\newcommand\set[1]{\left\{#1\right\}}  
\renewcommand{\phi}[0]{\varphi}
\newcommand{\eps}[0]{\varepsilon}
\newcommand{\cstar}[0]{\ensuremath{\mathrm{C}^*}}
\newcommand{\cc}[0]{\ensuremath{\simeq_{\mathrm{cc}}}}
\newcommand{\id}[0]{\ensuremath{\operatorname{id}}}
\newcommand{\dist}[0]{\ensuremath{\operatorname{dist}}}
\newcommand{\dst}[0]{\displaystyle}
\newcommand{\supp}[0]{\operatorname{supp}}

\newtheorem{satz}{Satz}[section]		

\newaliascnt{corCT}{satz}
\newtheorem{corollary}[corCT]{Corollary}
\aliascntresetthe{corCT}
\providecommand*{\corCTautorefname}{Corollary}
\newaliascnt{lemmaCT}{satz}
\newtheorem{lemma}[lemmaCT]{Lemma}
\aliascntresetthe{lemmaCT}
\providecommand*{\lemmaCTautorefname}{Lemma}
\newaliascnt{propCT}{satz}
\newtheorem{proposition}[propCT]{Proposition}
\aliascntresetthe{propCT}
\providecommand*{\propCTautorefname}{Proposition}
\newaliascnt{theoremCT}{satz}
\newtheorem{theorem}[theoremCT]{Theorem}
\aliascntresetthe{theoremCT}
\providecommand*{\theoremCTautorefname}{Theorem}
\newtheorem*{theoreme}{Theorem}
\newtheorem*{core}{Corollary}

\theoremstyle{definition}

\newaliascnt{conjectureCT}{satz}
\newtheorem{conjecture}[conjectureCT]{Conjecture}
\aliascntresetthe{conjectureCT}
\providecommand*{\conjectureCTautorefname}{Conjecture}
\newaliascnt{defiCT}{satz}
\newtheorem{definition}[defiCT]{Definition}
\aliascntresetthe{defiCT}
\providecommand*{\defiCTautorefname}{Definition}
\newtheorem*{defie}{Definition}
\newaliascnt{notaCT}{satz}
\newtheorem{notation}[notaCT]{Notation}
\aliascntresetthe{notaCT}
\providecommand*{\notaCTautorefname}{Notation}
\newtheorem*{notae}{Notation}
\newaliascnt{remCT}{satz}
\newtheorem{remark}[remCT]{Remark}
\aliascntresetthe{remCT}
\providecommand*{\remCTautorefname}{Remark}
\newtheorem*{reme}{Remark}
\newaliascnt{exampleCT}{satz}
\newtheorem{example}[exampleCT]{Example}
\aliascntresetthe{exampleCT}
\providecommand*{\exampleCTautorefname}{Example}
\newaliascnt{questionCT}{satz}
\newtheorem{question}[questionCT]{Question}
\aliascntresetthe{questionCT}
\providecommand*{\questionCTautorefname}{Question}

\newcounter{theoremintro}
\renewcommand*{\thetheoremintro}{\Alph{theoremintro}}
\newaliascnt{theoremiCT}{theoremintro}
\newtheorem{theoremi}[theoremiCT]{Theorem}
\aliascntresetthe{theoremiCT}
\providecommand*{\theoremiCTautorefname}{Theorem}
\newaliascnt{coriCT}{theoremintro}
\newtheorem{cori}[coriCT]{Corollary}
\aliascntresetthe{coriCT}
\providecommand*{\coriCTautorefname}{Corollary}
\newaliascnt{conjectureiCT}{theoremintro}
\newtheorem{conjecturei}[conjectureiCT]{Conjecture}
\aliascntresetthe{conjectureiCT}
\providecommand*{\conjectureiCTautorefname}{Conjecture}

\numberwithin{equation}{section}
\renewcommand{\theequation}{e\thesection.\arabic{equation}}


\begin{abstract}
We investigate Matui--Sato's notion of property (SI) for \cstar-dynamics, this time with a focus on actions of possibly non-amenable groups.
The main result is a generalization of earlier work:
For any countable group $\Gamma$ and any non-elementary separable simple nuclear \cstar-algebra $A$ with strict comparison, every amenable $\Gamma$-action on $A$ has equivariant property (SI).
This is deduced from a more general statement involving relative property (SI) for certain inclusions into ultraproducts.
The article concludes with a few consequences of this result.
\end{abstract}

\maketitle


\section*{Introduction}

This article aims to advance the structure theory for actions of countable groups on well-behaved \cstar-algebras.
In particular the present work is inspired by new developments triggered by the notion of amenability for actions of nonamenable groups on \cstar-algebras; see \cite{Suzuki19, Suzuki21, BussEchterhoffWillett20, BussEchterhoffWillett23, BeardenCrann22, OzawaSuzuki21, GabeSzabo23kp, Suzuki23, Suzuki23-2}.
By now it stands to reason that many of the previously known structural properties that have been shown for actions of amenable groups can be expected to have some analog for amenable actions of not necessarily amenable groups, which may lead to some kind of classification theory in the long run.

In this article we revisit Matui--Sato's notion of equivariant property (SI) from this point of view.
Historically, property (SI) (abbreviating ``small isometries'') is an analog of Kirchberg--Phillips' famous observation \cite{KirchbergPhillips00} that the central sequence algebra of a Kirchberg algebra is simple and purely infinite (see also \cite{Kirchberg04}), except that one takes into account possibly existing traces on the underlying \cstar-algebra.
It rose to prominence in the structure theory of \cstar-algebras about a decade ago through various impactful applications \cite{MatuiSato12acta, MatuiSato14UHF, KirchbergRordam14, TomsWhiteWinter15, SatoWhiteWinter15, Winter16, BBSTWW, CETWW, CastillejosEvington20} in the Toms--Winter conjecture for separable unital simple nuclear \cstar-algebras; see also \cite{ElliottToms08, TomsWinter09, WinterZacharias10}.
Around the same time Matui and Sato additionally considered a generalization of their concept to actions of discrete groups \cite{Sato10, MatuiSato12, MatuiSato14}, which led to the first results asserting that certain actions of amenable groups on classifiable \cstar-algebras automatically absorb the trivial action on the Jiang--Su algebra $\CZ$ tensorially, as well as some initial classification of group actions on the Jiang--Su algebra itself.
Other consequences of property (SI) related to Rokhlin-type properties were explored in \cite{Liao16, Liao17, Szabo19ssa4, GardellaHirshbergVaccaro22}.

In the context of structure and classification, the Jiang--Su algebra \cite{JiangSu99} has become a cornerstone object as an infinite-dimensional \cstar-algebra that abstractly resembles the algebra of complex numbers $\IC$.
We refer to the prequel article \cite{Szabo21si} for a more thorough explanation of the relevance of Jiang--Su stability and related references.
Equivariant property (SI) is still the quintessential gadget for verifying that certain \cstar-dynamics are equivariantly Jiang--Su stable; see more recently \cite{Sato19, GardellaHirshbergVaccaro22, Wouters23, SzaboWouters23-2}.
We also note that in more restricted cases, it has been used as an ingredient to prove even more powerful classification theorems such as \cite{Nawata21, Nawata23, Nawata23preprint}.

As a follow-up to various results asserting that certain group actions automatically have equivariant property (SI), I showed in \cite{Szabo21si} that every action of an amenable group on any (possibly non-unital) classifiable \cstar-algebra has equivariant property (SI).
The main result of this note is the observation that the same holds true for amenable actions of arbitrary countable groups; see \autoref{thm:equi-SI} and \autoref{cor:equivariant-SI}.
Note that the existence of amenable actions on finite classifiable \cstar-algebras has recently been established by Suzuki in \cite{Suzuki23, Suzuki23-2}, though at the moment the existence of such examples on unital \cstar-algebras remains open. 
While utilizing in large part the approach laid out in \cite{Szabo21si}, the new conceptual part of the proof of our main result is an adaptation of an averaging trick from \cite{GabeSzabo23kp} to replace the use of F{\o}lner sets in amenable groups.
The main argument to make this work turns out to be fairly straightforward (though nontrivial) if the group action is outer, but needs an extra twist if the group action is made up of both outer and inner automorphisms.
In the general case, our proof of equivariant property (SI) makes use of a slightly new characterization of amenability, proven in the second section, that takes into account the normal subgroup of elements acting via inner automorphisms.
The proof of this new characterization uses an adapted version of the Hahn--Banach trick from the main proof in Ozawa--Suzuki's recent work on amenability and the quasicentral approximation property \cite{OzawaSuzuki21}.

This note ends with two direct corollaries of the main result that generalize known results for actions of amenable groups.
We let $\Gamma$ be a countable group.
In \autoref{cor:Kirchberg}, it is shown that every amenable $\Gamma$-action on a Kirchberg algebra $A$ is equivariantly $\CO_\infty$-absorbing.
We note that this fact could be deduced from results in \cite{GabeSzabo23kp} for outer actions, but not for general actions in a direct way.
In \autoref{cor:equi-Z-stab}, it is shown that a given amenable $\Gamma$-action on a finite classifiable \cstar-algebra $A$ is equivariantly Jiang--Su stable if and only if for the induced $\Gamma$-action on the uniform tracial central sequence algebra $A^\omega\cap A'$, the fixed point algebra $(A^{\omega}\cap A')^\Gamma$ contains a unital copy of any finite matrix algebra.
It remains an open problem to what extent the latter condition is redundant (even for $\Gamma$ amenable \cite{SzaboWouters23-2}), though this needs to be tackled with different methods from the ones treated in this note.
In light of the fact that the analogous phenomenon holds in the realm of von Neumann algebras \cite{SzaboWouters23-1}, this gives rise to a question about a generalization of \cite[Conjecture A]{Szabo21si}: Is it true that every amenable $\Gamma$-action on any classifiable \cstar-algebra is equivariantly Jiang--Su stable?

In private correspondence, the authors of \cite{GGKNV23} informed me of some striking observations related to equivariant property (SI) and their notion of tracial amenability, which will likely appear in their subsequent work.


\section{Preliminaries}

Most of the notation and concepts given in this section are recalled directly from \cite{Szabo21si}.
In this article we assume familiarity with the cone of lower semi-continuous traces $T(A)$ on a \cstar-algebra $A$; see \cite{ElliottRobertSantiago11} for a detailed exposition.
We will at times make use of the Pedersen ideal $\CP(A)$ of a \cstar-algebra $A$ and its basic properties \cite[Section 5.6]{Pedersen} without explicit reference.
We say that a \cstar-algebra is non-elementary, if it is not isomorphic to the algebra of compact operators on some Hilbert space.
When $a$ and $b$ are some elements in a \cstar-algebra, we may write $a=_\eps b$ as short-hand for $\|a-b\|\leq\eps$.
We will throughout denote by $\omega$ a free ultrafilter on $\IN$.
We say that a mathematical statement involving a natural number $n$ holds ``for $\omega$-all $n$'' if the set of all $n\in\IN$ for which the statement is true belongs to $\omega$.

\begin{notation}
Let $A$ be a simple \cstar-algebra.
In the context of considering the tracial cone $T(A)$, we use the symbol $0$ for the zero trace, and the symbol $\infty$ for the trace taking the value $\infty$ on all non-zero positive elements; these are the trivial traces on $A$.
Recall that a non-trivial lower semi-continuous trace $\tau$ on $A$ is automatically faithful and densely defined.
We consider the corresponding dimension function $d_\tau: A_+\to [0,\infty]$ given by $d_\tau(a)=\lim_{n\to\infty} \tau(f_n(a))$, where $f_n: [0,\infty)\to [0,1]$ is any pointwise-increasing sequence of continuous functions with $f_n(0)=0$ and $\lim_{n\to\infty} f_n(t)=1$ for all $t>0$. 

A compact generator $K$ of $T(A)$ is a compact subset $K\subset T(A)\setminus\set{0,\infty}$ such that $\IR^{>0}\cdot K = T(A)\setminus\set{0,\infty}$.
We call $A$ traceless, if there are no non-trivial traces on $A$, in which case we explicitly declare the empty set to be a compact generator of $T(A)$.
\end{notation}

\begin{definition}
A simple \cstar-algebra $A$ is said to have local strict comparison\footnote{The better-known property called strict comparison calls for the stabilization $A\otimes\CK$ to have local strict comparison.}, if the following is true.
Whenever two non-zero positive elements $a,b\in \mathcal P(A)_+$ in the Pedersen ideal satisfy $d_\tau(a) < d_\tau(b)$ for all $\tau\in T(A)\setminus\set{0,\infty}$, then it follows that $a\precsim b$, i.e., there is a sequence $r_n\in A$ with $r_n^*br_n\to a$.
\end{definition}

\begin{remark} \label{rem:traceless-comparison}
A simple traceless \cstar-algebra has local strict comparison if and only if it is purely infinite.
Thus, a separable simple nuclear traceless \cstar-algebra with local strict comparison is precisely a Kirchberg algebra.
\end{remark}

\begin{notation}[cf.\ \cite{Kirchberg04}]
For a specified sequence of \cstar-algebras $B_n$ and a free ultrafilter $\omega$ on $\IN$, we will denote by
\[
\CB_\omega = \prod_{n\in\IN} B_n / \set{ (b_n)_n \mid \lim_{n\to\omega} \|b_n\|=0 }
\]
their ultraproduct \cstar-algebra.
If $\Gamma$ is a discrete group and $\beta_n:\Gamma\curvearrowright B_n$ is a sequence of actions, we furthermore denote by $\beta_\omega:\Gamma\curvearrowright\CB_\omega$ the induced ultraproduct action.

For an inclusion of a (usually separable) \cstar-algebra $A\subset\CB_\omega$, we write
\[
\CB_\omega\cap A' = \set{ x\in\CB_\omega \mid [x,A]=0 },\quad \CB_\omega\cap A^\perp = \set{x\in\CB_\omega \mid xA=Ax=0},
\]
and define
\[
F(A,\CB_\omega) = (\CB_\omega\cap A')/(\CB_\omega\cap A^\perp).
\]
If furthermore $A$ is $\beta_\omega$-invariant, then so are $\CB_\omega\cap A'$ and $\CB_\omega\cap A^\perp$, and we get an induced action on the quotient $\tilde{\beta}_\omega:\Gamma\curvearrowright F(A,\CB_\omega)$.
In the special case where $B_n=A$ for all $n$ and the inclusion $A\subset A_\omega$ is the obvious one, we abbreviate $F_\omega(A)=F(A,A_\omega)$.
\end{notation}

\begin{definition} \label{def:generalized-limit-trace}
Let $B_n$ be a sequence of \cstar-algebras and $\omega$ a free ultrafilter on $\mathbb N$.
For a sequence $\tau_n$ of lower semi-continuous traces on $B_n$, we may define a lower semi-continuous trace $\tau^\omega: \Big( \prod_{n\in\mathbb N} B_n \Big)_+\to [0,\infty]$ via
\[
\tau^\omega( (b_n)_n ) = \sup_{\eps>0} \lim_{n\to\omega} \tau_n\big( (b_n-\eps)_+ \big),
\]
where $b_n\in A$ is a bounded sequence of positive elements.\footnote{See \cite[Lemma 3.1]{ElliottRobertSantiago11}, which implies that this map indeed yields a lower semi-continuous trace.}
We see that $\tau^\omega( (b_n)_n ) = 0$ whenever $\lim_{n\to\omega} \|b_n\| = 0$, so $\tau^\omega$ induces a lower semi-continuous trace on $\CB_\omega$, which we also denote by $\tau^\omega$ with slight abuse of notation.
A trace of this form on $\CB_\omega$ is called a generalized limit trace.
\end{definition}

\begin{remark} \label{rem:traces-on-FA}
With the same assumptions as in \autoref{def:generalized-limit-trace}, let additionally $A\subseteq \CB_\omega$ be a \cstar-subalgebra.
Suppose that $\tau^\omega$ is a generalized limit trace on $\CB_\omega$.
Let $a\in A_+$ be a positive element with $\tau^\omega(a)<\infty$.
The assignment $\tau^\omega_a: (\CB_\omega\cap A')_+\to [0,\infty]$, $x\mapsto\tau^\omega(xa)$ defines a bounded trace of norm equal to $\tau^\omega(a)$.
It is clear that $\tau^\omega_a(x)=0$ for all $x\in \CB_\omega\cap A^\perp$.
This induces a trace on $F(A, \CB_\omega)$, which we again denote $\tau^\omega_a$, and which has the same norm.
\end{remark}

\begin{definition} \label{def:null-full}
Let $B_n$ be a sequence of simple \cstar-algebras with ultraproduct $\CB_\omega$, and let $A\subset \CB_\omega$ be a non-zero simple \cstar-subalgebra.
\begin{enumerate}[label=\textup{(\roman*)},leftmargin=*]
\item \label{def:null-full:1} 
We say that a positive contraction $f\in \CB_\omega\cap A'$ or $f\in F(A, \CB_\omega)$ is tracially supported at 1, if either one of the following is true:
Either $B_n$ is traceless for $\omega$-all $n$, in which case we demand $\|fa\|=\|a\|$ for all $a\in A_+$.
Or, $B_n$ has non-trivial traces for $\omega$-all $n$, in which case we demand the following:
For every (or any) non-zero positive element $a\in\CP(A)$, there exists a constant $\kappa=\kappa(f,a)>0$ such that for every generalized limit trace $\tau^\omega$ with $0<\tau^\omega(a)<\infty$, one has $\displaystyle \inf_{k\in\mathbb N} \tau^\omega_a(f^k) \geq \kappa\tau^\omega(a)$.
\item \label{def:null-full:2} 
We say that a positive element $e\in \CB_\omega\cap A'$ or $e\in F(A, \CB_\omega)$ is tracially null, if $\tau^\omega_a(e)=0$ for every positive element $a\in\CP(A)$ and every generalized limit trace $\tau^\omega$ on $\CB_\omega$ with $\tau^\omega(a)<\infty$.
\end{enumerate}
\end{definition}

The next two definitions have their origin in \cite{Sato10, MatuiSato12, MatuiSato12acta, MatuiSato14}:

\begin{definition} \label{def:property-SI}
Let $B_n$ be a sequence of simple \cstar-algebras with ultraproduct $\CB_\omega$.
Suppose that $\Gamma$ is a countable discrete group and $\beta_n: \Gamma\curvearrowright B_n$ is a sequence of actions giving rise to the ultraproduct action $\beta_\omega: \Gamma\curvearrowright\CB_\omega$.
Let $A\subset\CB_\omega$ be a non-zero separable simple $\beta_\omega$-invariant \cstar-subalgebra.
We say that the inclusion $A \subset \CB_\omega$ has equivariant property (SI) relative to $\beta_\omega$ if the following holds:

Whenever $e,f\in F(A,\CB_\omega)^{\tilde{\beta}_\omega}$ are two positive contractions  such that $f$ is tracially supported at 1 and $e$ is tracially null, there exists a contraction $s\in F(A,\CB_\omega)^{\tilde{\beta}_\omega}$ with $fs=s$ and $s^*s=e$.

More specifically, we say that a separable simple \cstar-algebra $A$ has property (SI) relative to an action $\alpha: \Gamma\curvearrowright A$, if the canonical inclusion $A\subset A_\omega$ has property (SI) relative to the ultrapower action $\alpha_\omega$.
\end{definition}


\section{The quasicentral approximation property relative to a subgroup}

\begin{notation}
Let $A$ be a \cstar-algebra and $I$ a non-empty set.
We consider the space of finitely supported maps $I\to A$, denoted $c_{00}(I,A)$, equipped with the $A$-valued inner product given by $\langle \xi_1 \mid \xi_2 \rangle = \sum_{j\in I} \xi_1(j)^*\xi_2(j)$.
The resulting norm on $c_{00}(I,A)$ shall be denoted as $\|\cdot\|_2$ and $\ell^2(I,A)$ is defined as the $\|\cdot\|_2$-closure of $c_{00}(I,A)$.
Given a state $\phi$ on $A$, we denote by $\|\cdot\|_\phi$ the seminorm on $\ell^2(I,A)$ given via $\|\eta \|_\phi=\phi(\langle\eta\mid\eta\rangle)^{1/2}$.
For notational convenience, we sometimes write $|\eta|=\langle\eta\mid\eta\rangle^{1/2}\in A_+$ for $\eta\in\ell^2(I,A)$.
\end{notation}

\begin{lemma} \label{lem:ell2-Kaplansky}
Let $A$ be a \cstar-algebra and $I$ a non-empty set.
Given any contraction $\xi\in\ell^2(I,A^{**})$, there exists a net of contractions $\zeta_i\in c_{00}(I,A)$ with $\|\xi-\zeta_i\|_{\phi}\to 0$ for every state $\phi$ on $A$.
\end{lemma}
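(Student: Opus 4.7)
The plan is to reduce to finitely supported $\xi$ and then invoke Kaplansky's density theorem in a matrix algebra over $A$. The key observation for the reduction is the inequality $\|\eta\|_\phi \leq \|\eta\|_2$, valid for every state $\phi$ because $\langle\eta\mid\eta\rangle \leq \|\eta\|_2^2\cdot 1_{A^{**}}$ and $\tilde\phi$ is contractive. So it suffices to approximate $\xi$ first in $\|\cdot\|_2$ by a finitely supported contraction in $c_{00}(I,A^{**})$, and then that element in each $\|\cdot\|_\phi$-seminorm by contractions in $c_{00}(I,A)$. The first reduction is routine: writing $\xi=\sum_{j\in I}\delta_j\otimes \xi_j$ with $\xi_j\in A^{**}$ and $\sum_j \xi_j^*\xi_j\leq 1$, the finite partial sums $\xi_F=\sum_{j\in F}\delta_j\otimes \xi_j$ are themselves contractions and converge to $\xi$ in $\|\cdot\|_2$.

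For the main step, assume $\xi=\sum_{k=1}^n \delta_{j_k}\otimes x_k$ with $\sum_k x_k^*x_k\leq 1$. I treat $\xi$ as the first column of the matrix $X=\sum_{k=1}^n e_{k,1}\otimes x_k\in M_n(A)^{**}$, which is a contraction because $X^*X=e_{1,1}\otimes \sum_k x_k^*x_k$. By Kaplansky density applied to the inclusion $M_n(A)\subseteq M_n(A)^{**}$, there is a net of contractions $Y^{(\lambda)}\in M_n(A)$ converging strong-$*$ to $X$. Replacing $Y^{(\lambda)}$ by $Y^{(\lambda)}e_{1,1}$ preserves contractivity and the strong-$*$ limit (since $Xe_{1,1}=X$), and ensures that $Y^{(\lambda)}=\sum_{k=1}^n e_{k,1}\otimes a_k^{(\lambda)}$ for some $a_k^{(\lambda)}\in A$.

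Setting $\zeta^{(\lambda)}=\sum_k \delta_{j_k}\otimes a_k^{(\lambda)}\in c_{00}(I,A)$, the same matrix computation gives $\|\zeta^{(\lambda)}\|_2=\|Y^{(\lambda)}e_{1,1}\|\leq 1$. To verify the convergence, let $\phi$ be a state on $A$ with normal extension $\tilde\phi$ on $A^{**}$. Strong-$*$ convergence of the individual column entries yields $\tilde\phi\bigl((x_k-a_k^{(\lambda)})^*(x_k-a_k^{(\lambda)})\bigr)\to 0$ for each $k$, so that $\|\xi-\zeta^{(\lambda)}\|_\phi^2$, being a finite sum of such terms, tends to zero. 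A standard diagonal argument indexed by finite subsets of $I$, finite sets of states, and $\eps>0$ assembles these approximations into the required net in the general case.

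The only non-routine ingredient is the matricial encoding: one cannot Kaplansky-approximate each $x_k$ independently and also keep $\sum_k (a_k^{(\lambda)})^*a_k^{(\lambda)}\leq 1$, so bundling all $n$ coordinates into a single contractive element of $M_n(A)^{**}$ is precisely what allows a single invocation of Kaplansky density to control the global $\ell^2$-norm and every local $\|\cdot\|_\phi$ at once.
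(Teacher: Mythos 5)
Your proof is correct. The paper does not actually prove this lemma---it only points to \cite[Subsection 2.6]{OzawaSuzuki21} and \cite[Corollary 4.6]{BussEchterhoffWillett23}---and your argument (cut down to a finitely supported contraction using $\|\cdot\|_\phi\leq\|\cdot\|_2$, encode the coordinates as the first column of a contraction in $M_n(A)^{**}=M_n(A^{**})$, apply Kaplansky density once, and compress by $e_{1,1}\otimes 1$ to stay in the column while keeping entries in $A$) is precisely the standard argument underlying those references, correctly executed.
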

\begin{proof}
This is explained in the last paragraph of \cite[Subsection 2.6]{OzawaSuzuki21} or it can be  deduced as a consequence from \cite[Corollary 4.6]{BussEchterhoffWillett23}.
\end{proof}

\begin{definition}[see {\cite[Definition 4.1]{Delaroche87}}]
Let $\Gamma$ be a countable group.
Let $\alpha: \Gamma\curvearrowright A$ be an action on a \cstar-algebra.
We consider the induced action $\alpha^{**}: \Gamma\curvearrowright A^{**}$ and the action $\bar{\alpha}^{**}: \Gamma\curvearrowright\ell^\infty(\Gamma,A^{**})$ given by $\bar{\alpha}^{**}_g(f)(h)=\alpha^{**}_g(f(g^{-1}h))$.
We say that $\alpha$ is amenable, if $\alpha^{**}$ is von Neumann amenable, i.e., there exists a $\Gamma$-equivariant conditional expectation
\[
\ell^\infty(\Gamma,\CZ(A^{**})) \to \CZ(A^{**}).
\]
\end{definition}

\begin{definition}[cf.\ \cite{KadisonRingrose67}]
Let $\alpha$ be an automorphism on a \cstar-algebra $A$.
We say that $\alpha$ is universally weakly inner, if the induced automorphism $\alpha^{**}$ on the double dual $A^{**}$ is inner.
\end{definition}

\begin{notation}
Let $\alpha: \Gamma\curvearrowright A$ be an action on a \cstar-algebra.
Let $N\subseteq\Gamma$ be a subgroup.
We shall write $\bar{g}=gN\in\Gamma/N$ for $g\in\Gamma$ to lighten notation.
We denote by $\bar{\alpha}: \Gamma\curvearrowright \ell^2(\Gamma/N,A)$ the isometric action given by $\bar{\alpha}_g(\xi)(\bar{h})=\alpha_g\big(\xi(\overline{g^{-1}h})\big)$ for all $g,h\in\Gamma$ and $\xi\in\ell^2(\Gamma/N,A)$.
\end{notation}

The proof of the following lemma borrows the Hahn--Banach argument from the proof of \cite[Theorem 3.2]{OzawaSuzuki21}.
The lemma itself is a slight generalization of part of that theorem for discrete groups, or can be seen as a generalization of the same result for discrete groups that has appeared independently in a prior preprint version of \cite{BussEchterhoffWillett23}.

\begin{lemma} \label{lem:relative-QAP}
Let $\alpha: \Gamma\curvearrowright A$ be an action on a \cstar-algebra.
Let $N\subseteq\Gamma$ be a normal subgroup so that $\alpha_g$ is universally weakly inner for all $g\in N$.
If $\alpha$ is amenable, then there exists a net of contractions $\zeta_i\in\ell^2(\Gamma/N,A)$ satisfying
\[
\langle\zeta_i\mid\zeta_i\rangle a\to a,\quad \|a \zeta_i-\zeta_i a\|_2\to 0,\quad \|\bar{\alpha}_g(\zeta_i)-\zeta_i\|_2\to 0
\]
for all $a\in A$ and $g\in\Gamma$.
\end{lemma}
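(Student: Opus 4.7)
Fix $\eps>0$, a finite set $F\subset\Gamma$, and positive contractions $a_1,\dots,a_n\in A$. It suffices to produce a single contraction $\zeta\in\ell^2(\Gamma/N,A)$ for which each of $\|\langle\zeta\mid\zeta\rangle a_k-a_k\|$, $\|a_k\zeta-\zeta a_k\|_2$, and $\|\bar\alpha_g\zeta-\zeta\|_2$ is at most $\eps$ for $g\in F$, $k=1,\dots,n$; varying over such data indexes the desired net. The plan is to first produce an exact candidate in the bidual, then approximate it with elements of $c_{00}(\Gamma/N,A)$ by a Hahn--Banach trick.

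The bidual candidate comes from a Reiter--Day convexity argument: amenability of $\alpha$, equivalently von Neumann amenability of $\alpha^{**}$, produces a positive $\mu\in c_{00}(\Gamma,\CZ(A^{**}))_+$ with $\sum_{g\in\Gamma}\mu(g)=\eins$ and $\|\bar\alpha^{**}_g\mu-\mu\|_1$ arbitrarily small on $F$. Normality of $N$ makes summation over cosets intertwine $\bar\alpha^{**}$ with $\bar\alpha$, so the push-forward $\bar\mu(\bar g):=\sum_{h\in gN}\mu(h)$ belongs to $c_{00}(\Gamma/N,\CZ(A^{**}))_+$ with $\sum_{\bar g}\bar\mu(\bar g)=\eins$ and the corresponding approximate invariance. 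Powers--Størmer applied to $\bar\nu:=\bar\mu^{1/2}\in\ell^2(\Gamma/N,\CZ(A^{**}))$ gives a contraction with $|\bar\nu|^2=\eins$ and $\|\bar\alpha_g\bar\nu-\bar\nu\|_2$ small. Crucially, the assumption that $\alpha^{**}_g=\operatorname{Ad}(u_g)$ for some $u_g\in A^{**}$ whenever $g\in N$ forces $\alpha^{**}_g$ to act trivially on $\CZ(A^{**})$, so $\bar\nu$ is \emph{exactly} $\bar\alpha_g$-fixed for every $g\in N$; hence only invariance modulo $N$ needs to be arranged. Now pick a contraction $b\in A_+$ with $ba_k=a_k$ and form $\eta:=b^{1/2}\bar\nu\in\ell^2(\Gamma/N,A^{**})$. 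Centrality of each $\bar\nu(\bar g)$ gives $\langle\eta\mid\eta\rangle=b$, whence $\langle\eta\mid\eta\rangle a_k=a_k$ and $[a_k,\eta]=0$ on the nose; moreover $\|\bar\alpha_g\eta-\eta\|_2\leq\|\alpha_g(b^{1/2})-b^{1/2}\|+\|\bar\alpha_g\bar\nu-\bar\nu\|_2$.

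It remains to produce genuine $A$-valued approximants with near-zero defects. By \autoref{lem:ell2-Kaplansky}, $\eta$ admits approximation by contractions $\zeta_j\in c_{00}(\Gamma/N,A)$ with $\|\eta-\zeta_j\|_\phi\to 0$ for every state $\phi$ on $A$. This gives pointwise-weak convergence of the defects but not their Hilbert-$A$-module-norm control, and upgrading one to the other is the crux. Following the Hahn--Banach strategy of \cite[Theorem 3.2]{OzawaSuzuki21} (suitably adapted to the coset space $\Gamma/N$ and to the three types of defects), one argues by contradiction: if no $\zeta\in\ell^2(\Gamma/N,A)_{\leq 1}$ achieved all three $\eps$-estimates, then an appropriate convex set of defect data arising from contractions $\zeta$ would fail to contain $0$ in its norm closure, so Hahn--Banach yields a separating linear functional. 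Evaluating this functional on a weak limit of the $\zeta_j$ of the previous paragraph contradicts the exact vanishing of the corresponding defects for $\eta$ established above.

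The main obstacle is precisely Step~3: promoting the state-seminorm convergence of \autoref{lem:ell2-Kaplansky} to uniform control of the $\|\cdot\|_2$-defects, which are not continuous in the $\|\cdot\|_\phi$-seminorms. The Hahn--Banach device of Ozawa--Suzuki, reformulated on the coset space, is what bridges this gap, and the conceptual role played by universal weak inner-ness on $N$ is to ensure that the central candidate $\bar\nu$ is $N$-invariant for free, so that amenability only needs to supply invariance transverse to $N$ --- matching the target space $\ell^2(\Gamma/N,A)$.
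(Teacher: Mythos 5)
Your overall strategy matches the paper's: build an exact (or nearly exact) candidate in $\ell^2(\Gamma/N,\CZ(A^{**}))$ from amenability, observe that universal weak innerness of $\alpha|_N$ trivializes the action of $N$ on $\CZ(A^{**})$ so that only $\Gamma/N$-invariance must be arranged, descend to $A$-valued elements via \autoref{lem:ell2-Kaplansky}, and close the gap between state-seminorm convergence and $\|\cdot\|_2$-control by a Hahn--Banach argument in the spirit of Ozawa--Suzuki. The differences in how you produce the bidual candidate (Reiter $\ell^1$-functions plus Powers--St{\o}rmer, and multiplication by a local unit $b^{1/2}$, versus the paper's direct appeal to Anantharaman-Delaroche's $\ell^2$-characterization) are cosmetic.

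However, there is a genuine gap in your final step. The map $\zeta\mapsto(\text{defect data of }\zeta)$ is \emph{quadratic}, so the set of defects arising from contractions $\zeta\in c_{00}(\Gamma/N,A)$ is not convex, and Hahn--Banach separation of $0$ from a non-convex set is not available. What the separation (equivalently, Mazur's theorem) actually gives you, starting from the weak convergence of the defects $q_j$ to $0$, is that some \emph{convex combination} $\sum_k\lambda_k q_{j_k}$ is norm-small --- not that a single contraction has small defect. The paper bridges exactly this gap with an additional trick: since $\Gamma/N$ is infinite (the finite-index case being handled separately at the outset by an approximate-unit argument, a case your proof also does not address), one can choose translates $t_1,\dots,t_n\in\Gamma/N$ making the sets $\overline{K}\cdot\supp(\eta_{j_k})t_k^{-1}$ pairwise disjoint, so that $\zeta(\bar g)=\sum_k\lambda_k^{1/2}\eta_{j_k}(\bar g t_k)$ is a single contraction whose defect equals $\sum_k\lambda_k q_{j_k}$ on the nose (all cross terms vanish). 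Without this disjoint-support realization of the convex combination as an honest defect, your contradiction argument does not produce the required $\zeta$, and the proof is incomplete at its crux.
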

\begin{proof}
First we note that the claim is easy if $N$ is a subgroup of finite index.
In that case, pick an increasing and approximately $\alpha$-invariant approximate unit $e_i\in A$ and check with a straightforward calculation that the net $\zeta_i(\bar{g})=|\Gamma/N|^{-1/2}e_i$ does the job.
Henceforth, we will thus assume that $N$ has infinite index in $\Gamma$.

We can consider the unital embedding 
\[
\iota_N: \ell^\infty(\Gamma/N,\CZ(A^{**}))\to\ell^\infty(\Gamma,\CZ(A^{**}))
\] 
given by $\iota_N(f)(g)=f(\bar{g})$ for all $f\in\ell^\infty(\Gamma/N,\CZ(A^{**}))$ and $g\in\Gamma$.
The image of $\iota_N$ is clearly globally $\bar{\alpha}^{**}$-invariant and the induced action on $\ell^\infty(\Gamma/N,\CZ(A^{**}))$ is denoted also as $\bar{\alpha}^{**}$ with slight abuse of notation.
Since $\alpha$ is amenable, there exists a $\Gamma$-equivariant conditional expectation 
\[
E: \ell^\infty(\Gamma,\CZ(A^{**}))\to\CZ(A^{**}).
\]
The composition $E\circ\iota_N$ yields a $\Gamma$-equivariant conditional expectation 
\[
\ell^\infty(\Gamma/N,\CZ(A^{**}))\to\CZ(A^{**}).
\]
By definition, the restricted action $\alpha^{**}|_N$ is trivial on $\CZ(A^{**})$, which implies that $N$ acts trivially on both sides above.
We can thus identify $\alpha^{**}$ on $\CZ(A^{**})$ with a $\Gamma/N$-action after modding out $N$, which we have just argued yields an amenable action of $\Gamma/N$.

By \cite[Theorem 3.3]{Delaroche87}, there exists a net of contractions $\xi_i\in c_{00}(\Gamma/N,\CZ(A^{**}))$ such that
\[
\langle \xi_i \mid \bar{\alpha}^{**}_g(\xi_i)\rangle \to \eins \text{ ultraweakly for all } g\in\Gamma.
\]
From this we can conclude $\langle\xi_i\mid\xi_i\rangle\to \eins$ ultraweakly and
\[
\begin{array}{cl}
\multicolumn{2}{l}{ \langle \bar{\alpha}^{**}_g(\xi_i)-\xi_i \mid \bar{\alpha}^{**}_g(\xi_i)-\xi_i\rangle }\\
=& \langle\xi_i\mid\xi_i\rangle + \alpha^{**}_g(\langle\xi_i\mid\xi_i\rangle) - \langle\xi_i\mid\bar{\alpha}^{**}_g(\xi_i)\rangle - \langle\xi_i\mid\bar{\alpha}^{**}_g(\xi_i)\rangle^* \\
\to& \eins+\eins-\eins-\eins = 0
\end{array} 
\]
ultraweakly for any $g\in\Gamma$.
Let $\rho_1,\dots,\rho_n$ be finitely many states on $A$, which we view as normal states on $A^{**}$.
Let $F\subset A$ and $K\subset\Gamma$ be finite subsets.
Let $\eps>0$.
For sufficiently large $i$, we can conclude
\[
\rho_j(\langle\xi_i\mid\xi_i\rangle)\geq 1-\frac{\eps}{2} \quad\text{and}\quad \|\bar{\alpha}^{**}_g(\xi_i)-\xi_i \|_{\rho_j}\leq\eps/2
\]
for all $g\in K$ and $j=1,\dots,n$.
Using \autoref{lem:ell2-Kaplansky}, we can find a contraction $\eta\in c_{00}(\Gamma/N,A)$ satisfying
\[
\rho_j(\langle \eta \mid \eta\rangle)\geq 1-\eps, \quad \|\bar{\alpha}_g(\eta)-\eta \|_{\rho_j}\leq\eps,\quad \|a\eta-\eta a\|_{\rho_j}\leq\eps
\]
for all $g\in K$, $a\in F$ and $j=1,\dots,n$.
Since the tuple of states $(\rho_1,\dots,\rho_n)$ and $\eps>0$ was arbitrary, we can find a net of contractions $\eta_j\in c_{00}(\Gamma/N,A)$ such that
\[
q_j:=\sum_{g\in K} |\bar{\alpha}_g(\eta_j)-\eta_j|^2 + \sum_{a\in F} \Big( a^*(\eins-\langle\eta_j\mid\eta_j\rangle)a + |a \eta_j-\eta_j a|^2 \Big) \longrightarrow 0
\]
in the weak topology.
Now fix $\eps>0$ again.
By the Hahn--Banach theorem, we may find $\lambda_1,\dots,\lambda_n\in [0,1]$ with $\sum_{k=1}^n\lambda_k=1$ and indices $j_1,\dots,j_n$ such that $\sum_{k=1}^n \lambda_k q_{j_k} \leq \eps$.
Using that $\Gamma/N$ is infinite, we may pick elements $t_1,\dots,t_n\in\Gamma/N$ such that
\begin{equation} \label{eq:disjoint-supports}
\big[ \overline{K}\cdot\supp(\eta_{j_{k_1}}) t_{k_1}^{-1} \big] \cap\big[ \overline{K}\cdot\supp(\eta_{j_{k_2}}) t_{k_2}^{-1}\big] = \emptyset
\end{equation}
for all $1\leq k_1\neq k_2 \leq n$.
We define $\zeta\in c_{00}(\Gamma/N,A)$ via
\[
\zeta(\bar{g})=\sum_{k=1}^n \lambda_k^{1/2}\eta_{j_k}(\bar{g}t_k),\quad g\in\Gamma.
\]
We observe with conditition \eqref{eq:disjoint-supports} that
\[
\begin{array}{ccl}
\langle\zeta\mid\zeta\rangle &=& \displaystyle\sum_{\bar{g}\in\Gamma/N} \Big| \sum_{k=1}^n \lambda_k^{1/2}\eta_{j_k}(\bar{g}t_k) \Big|^2 \\
&=& \displaystyle\sum_{k=1}^n \lambda_k \sum_{\bar{g}\in\Gamma/N} |\eta_{j_k}(\bar{g})|^2 \\
&=& \displaystyle \sum_{k=1}^n \lambda_k \langle \eta_{j_k}\mid\eta_{j_k}\rangle.
\end{array}
\]
Thus it follows for $a\in F$ that
\[
a^*(\eins-\langle\zeta\mid\zeta\rangle)a = \sum_{k=1}^n \lambda_k a^*(\eins-\langle \eta_{j_k}\mid\eta_{j_k}\rangle) a \leq \sum_{k=1}^n \lambda_k q_{j_k} \leq \eps.
\]
Using the \cstar-identity one sees
\[
\|(\eins-\langle\zeta\mid\zeta\rangle) a\|^2 \leq \|(\eins-\langle\zeta\mid\zeta\rangle)^{1/2} a\|^2 = \|a^*(\eins-\langle\zeta\mid\zeta\rangle)a\|\leq\eps.
\]
In a similar fashion one sees
\[
|a\zeta-\zeta a|^2 = \sum_{k=1}^n \lambda_k |a\eta_{j_k}-\eta_{j_k}a|^2 \leq \sum_{k=1}^n \lambda_k q_{j_k} \leq\eps
\]
for all $a\in F$, and
\[
|\bar{\alpha}_g(\zeta)-\zeta|^2 = \sum_{k=1}^n \lambda_k |\bar{\alpha}_g(\eta_{j_k})-\eta_{j_k}|^2 \leq \sum_{k=1}^n \lambda_k q_{j_k} \leq \eps
\]
for all $g\in K$.
Since the triple $(F,K,\eps)$ was arbitrary, we may obtain a net as required by the claim.
\end{proof}


\section{Equivariant property (SI)}

The statement of the next lemma arises upon following the proof of \cite[Theorem 4.2]{Szabo21si} word for word until the third line of page 1218.
Although one assumes in that theorem that the group $\Gamma$ is amenable, we note that this assumption is only used in the later part of the proof we are not referring to here.

\begin{lemma} \label{lem:precursor-equi-SI}
Let $\Gamma$ be a countable group.
Let $B_n$ be a sequence of simple \cstar-algebras with local strict comparison, and let $\beta_n: \Gamma\curvearrowright B_n$ be a sequence of $\Gamma$-actions.
Let $\CB_\omega$ be the associated ultraproduct, and $\beta_\omega: \Gamma\curvearrowright\CB_\omega$ the corresponding ultraproduct action. 
Let $A$ be a non-elementary separable simple nuclear \cstar-algebra, and suppose that $A\subset\CB_\omega$ is an inclusion as a $\beta_\omega$-invariant \cstar-subalgebra.
Let $N\subseteq\Gamma$ be the normal subgroup of elements $g\in\Gamma$ such that the automorphism $\beta_{\omega,g}|_A$ on $A$ is inner.
Suppose that $\tilde{\beta}_{\omega,g}$ is trivial on $F(A,\CB_\omega)$ for all $g\in N$.

Suppose that $e,f\in F(A,\CB_\omega)^{\beta_\omega}$ are two positive contractions such that $e$ is tracially null and $f$ is tracially supported at 1.
Then there exists a contraction $s\in F(A,\CB_\omega)$ such that 
\[
fs=s,\ s^*s=e, \text{ and } s^*\beta_{\omega,g}(s)=0 \text{ for all } g\in\Gamma\setminus N.
\]
\end{lemma}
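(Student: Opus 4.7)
The plan is to replay the opening portion of the proof of \cite[Theorem 4.2]{Szabo21si} without yet appealing to amenability of $\Gamma$; indeed, the amenability-driven averaging step from the prequel is not needed here because the present lemma only requires orthogonality with respect to outer conjugates, not full equivariance of $s$.

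The first move is to invoke the non-equivariant property (SI) for the inclusion $A\subset\CB_\omega$, which holds by the main technical results of \cite{Szabo21si} given the standing hypotheses on $B_n$ and $A$, applied to the pair $(e,f)$. This produces an initial contraction $s_0\in F(A,\CB_\omega)$ with $fs_0=s_0$ and $s_0^*s_0=e$. The second move is to exploit outerness: for any finite subset $K\subset\Gamma\setminus N$, each $\beta_{\omega,g}|_A$ with $g\in K$ is outer, and a standard Rokhlin-type argument in $F(A,\CB_\omega)$ --- valid because $A$ is separable, simple and nuclear --- produces a positive contraction $d_K\in F(A,\CB_\omega)$ that is itself tracially supported at $1$ and satisfies $d_K\beta_{\omega,g}(d_K)=0$ for every $g\in K$. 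Incorporating $d_K$ into the SI construction, for instance by enlarging $A$ to the \cstar-subalgebra generated by $A$ and $d_K$ and reapplying the relative version of (SI), yields an element $s_K\in F(A,\CB_\omega)$ that additionally satisfies $d_K s_K=s_K$ on top of $fs_K=s_K$ and $s_K^*s_K=e$. The identity $d_Ks_K=s_K$ combined with the orthogonality property of $d_K$ under $\beta_{\omega,g}$ yields $s_K^*\beta_{\omega,g}(s_K)=0$ for every $g\in K$.

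The final step is a diagonal argument (Kirchberg's $\eps$-test applied along a countable exhaustion $K_1\subset K_2\subset\cdots$ of $\Gamma\setminus N$) which assembles the partial solutions $\{s_K\}$ into a single $s\in F(A,\CB_\omega)$ satisfying the required conditions for all $g\in\Gamma\setminus N$. The main technical obstacle is constructing $d_K$ with simultaneous tracial fullness and the orthogonality $d_K\beta_{\omega,g}(d_K)=0$ across all of $K$: for a single outer automorphism this is classical, but for several non-commuting outer automorphisms one must cut iteratively while preserving tracial fullness, which is possible thanks to the ample room in $F(A,\CB_\omega)$ provided by non-elementarity of $A$ together with local strict comparison in $\CB_\omega$. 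The hypothesis that $\tilde\beta_{\omega,g}$ is trivial on $F(A,\CB_\omega)$ for $g\in N$ is not used directly in this lemma --- it simply ensures that the inner part of $\Gamma$ imposes no further constraint on $s$ --- but it is essential for the downstream averaging used in the main theorem.
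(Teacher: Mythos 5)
Your overall strategy is essentially the one the paper intends: the paper's ``proof'' of this lemma is literally the instruction to re-run the first half of the proof of \cite[Theorem 4.2]{Szabo21si}, and that argument is exactly the Matui--Sato/Kishimoto scheme you describe --- orthogonalize a tracially full element under the outer automorphisms $\tilde{\beta}_{\omega,g}$ for $g$ in a finite subset $K\subset\Gamma\setminus N$, feed the result into the non-equivariant property (SI), and assemble the partial solutions with Kirchberg's $\eps$-test. Your side remark that the triviality of $\tilde{\beta}_{\omega,g}$ on $F(A,\CB_\omega)$ for $g\in N$ is not needed in this lemma but only for the averaging over $\Gamma/N$ in the main theorem is also accurate.

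Two points need repair. First, the device of ``enlarging $A$ to the \cstar-algebra generated by $A$ and $d_K$ and reapplying relative (SI)'' does not work as stated: $d_K$ lives in $F(A,\CB_\omega)$, not in $\CB_\omega$ alongside $A$, and even after lifting it, the enlarged subalgebra need not be simple or nuclear, while $F(C^*(A,d_K),\CB_\omega)$ is a different algebra in which $e$ and $f$ have no reason to sit. The correct and much simpler move is to build $d_K$ so that $fd_K=d_K$ and then apply the non-equivariant property (SI) directly to the pair $(e,d_K)$ in place of $(e,f)$; this yields $s_K$ with $d_Ks_K=s_K$ and $s_K^*s_K=e$, whence $fs_K=fd_Ks_K=s_K$ and $s_K^*\tilde{\beta}_{\omega,g}(s_K)=s_K^*d_K\tilde{\beta}_{\omega,g}(d_K)\tilde{\beta}_{\omega,g}(s_K)=0$ for $g\in K$. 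Second, the existence of $d_K$ --- a positive contraction dominated by $f$, still tracially supported at $1$, with $d_K\tilde{\beta}_{\omega,g}(d_K)=0$ for all $g$ in a finite set of elements acting outerly --- is the actual mathematical content of the lemma, and ``a standard Rokhlin-type argument'' is an assertion rather than a proof; what is needed is a Kishimoto-type outerness argument, which is precisely what the prequel supplies. Your iterative-cutting idea is viable (if $d_kd_{k+1}=d_{k+1}$ then orthogonality for the earlier group elements is inherited by $d_{k+1}$), but the preservation of tracial fullness at each cut is exactly the point that must be verified and is where non-elementarity and comparison enter. Finally, the element $s_0$ produced in your first step is never used and should be deleted.
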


We are now ready to prove our main technical result, from which the result mentioned in the introduction follows afterwards.

\begin{theorem} \label{thm:equi-SI}
Let $\Gamma$ be a countable amenable group.
Let $B_n$ be a sequence of simple \cstar-algebras with local strict comparison, and let $\beta_n: \Gamma\curvearrowright B_n$ be a sequence of $\Gamma$-actions.
Let $\CB_\omega$ be the associated ultraproduct, and $\beta_\omega: \Gamma\curvearrowright\CB_\omega$ the corresponding ultraproduct action. 
Let $\alpha: \Gamma\curvearrowright A$ be an amenable action on a non-elementary separable simple nuclear \cstar-algebra, and suppose that $A\subset\CB_\omega$ is an equivariant inclusion with the following property: For every $g\in\Gamma$, if the automorphism $\alpha_g$ on $A$ is inner, then $\tilde{\beta}_{\omega,g}$ is trivial on $F(A,\CB_\omega)$.
Then the inclusion $A\subset\CB_\omega$ has property (SI) relative to $\beta_\omega$.
\end{theorem}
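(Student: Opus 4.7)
The plan is to first apply \autoref{lem:precursor-equi-SI} to obtain a contraction $s$ that almost solves the problem but exhibits a cross-coset orthogonality, and then to average $s$ against the approximately invariant vectors produced by \autoref{lem:relative-QAP}. Let $N\trianglelefteq\Gamma$ be the normal subgroup of those $g \in \Gamma$ for which $\alpha_g$ is inner; since $\beta_\omega|_A = \alpha$, this matches the subgroup considered in the precursor lemma, and the hypothesis of the theorem ensures that $\tilde{\beta}_{\omega,g}$ is trivial on $F(A,\CB_\omega)$ for every $g \in N$. That lemma then produces a contraction $s \in F(A,\CB_\omega)$ satisfying $fs = s$, $s^*s = e$, and $s^*\tilde{\beta}_{\omega,g}(s) = 0$ for all $g \in \Gamma\setminus N$. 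As inner automorphisms are universally weakly inner and $\alpha$ is amenable, \autoref{lem:relative-QAP} combined with \autoref{lem:ell2-Kaplansky} furnishes a net of finitely supported contractions $\zeta_i \in c_{00}(\Gamma/N,A)$ that asymptotically centralize $A$, are asymptotically $\bar{\alpha}$-invariant, and satisfy $\langle\zeta_i\mid\zeta_i\rangle a \to a$ for every $a \in A$.

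Fix lifts $\hat s, \hat e, \hat f \in \CB_\omega \cap A'$ of $s,e,f$ respectively and form
\[
r_i \;:=\; \sum_{\bar g \in \Gamma/N} \zeta_i(\bar g)\, \beta_{\omega,g}(\hat s) \;\in\; \CB_\omega,
\]
a finite sum that is independent of the choice of coset representatives: for $h \in N$, the difference $\beta_{\omega,h}(\hat s) - \hat s$ belongs to $\CB_\omega \cap A^\perp$ and is thus annihilated by left multiplication by $\zeta_i(\bar g) \in A$. Since each $\beta_{\omega,g}(\hat s) \in \CB_\omega \cap A'$, it commutes with every $A$-valued coefficient appearing in the forthcoming products, so one may freely rearrange.

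The essential observation is that all cross-coset terms collapse. For $\bar g \neq \bar h$ in $\Gamma/N$ one has
\[
\beta_{\omega,g}(\hat s)^* \beta_{\omega,h}(\hat s) \;=\; \beta_{\omega,g}\bigl(\hat s^* \beta_{\omega,g^{-1}h}(\hat s)\bigr) \;\in\; \CB_\omega \cap A^\perp,
\]
and multiplication on either side by any element of $A$ kills such terms. Using in addition that $\beta_{\omega,g}(\hat e) - \hat e$ and $\beta_{\omega,g}(\hat f) - \hat f$ lie in $\CB_\omega \cap A^\perp$ (since $e,f$ are $\tilde{\beta}_\omega$-fixed) and that this subspace is a two-sided ideal of $\CB_\omega \cap A'$, only diagonal summands survive and one obtains the identities
\[
r_i^* r_i \;=\; \hat e \cdot \langle\zeta_i\mid\zeta_i\rangle, \qquad \hat f\, r_i \;=\; r_i,
\]
\[
\bigl(\beta_{\omega,g_0}(r_i) - r_i\bigr)^* \bigl(\beta_{\omega,g_0}(r_i) - r_i\bigr) \;=\; \hat e \cdot |\bar{\alpha}_{g_0}(\zeta_i) - \zeta_i|^2,
\]
together with $[a,r_i]^*[a,r_i] = \hat e \cdot |a\zeta_i - \zeta_i a|^2$ for $a \in A$. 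The asymptotic properties of $\zeta_i$ then force $\|[a,r_i]\|\to 0$, $\|\beta_{\omega,g_0}(r_i) - r_i\|\to 0$, and, for $a \in A$, $(r_i^* r_i - \hat e) a = \hat e(\langle\zeta_i\mid\zeta_i\rangle - \eins)a \to 0$ (analogously on the right), so that the class of $r_i^* r_i$ in $F(A,\CB_\omega)$ approaches $e$.

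A standard reindexation, using the separability of $A$, the countability of $\Gamma$, and the countable saturation of $\CB_\omega$, and proceeding as in the corresponding step of the proof of \cite[Theorem 4.2]{Szabo21si}, then assembles a sufficiently good subsequence of the $r_i$ into a single contraction $r \in \CB_\omega \cap A'$ whose image in $F(A,\CB_\omega)^{\tilde{\beta}_\omega}$ satisfies $fr = r$ and $r^*r = e$, as required. The main obstacle is conceptual rather than computational: because $\Gamma$ may fail to be amenable, classical F{\o}lner averaging over $\Gamma$ is unavailable, and the relative quasicentral approximation property of \autoref{lem:relative-QAP} is the precise substitute that, together with the orthogonality $s^*\tilde{\beta}_{\omega,g}(s) = 0$ for $g \notin N$ from the precursor lemma, forces the cross-terms to vanish and allows the averaging to succeed.
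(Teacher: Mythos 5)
Your proposal is correct and follows essentially the same route as the paper: apply \autoref{lem:precursor-equi-SI} to get $s$ with the cross-coset orthogonality, average it against the approximately invariant, approximately central vectors $\zeta_i\in c_{00}(\Gamma/N,A)$ from \autoref{lem:relative-QAP}, observe that the orthogonality kills all cross terms, and finish with a reindexation/Kirchberg $\eps$-test argument (the paper phrases the last step via $s_0^*as_0\approx ae$ and deduces commutation with $A$ afterwards from $a^*(s_1^*s_1-e)a\le 0$, whereas you control the commutator $[a,r_i]$ directly; both work). The only cosmetic differences are the order of the factors in the averaged sum and your use of explicit lifts in $\CB_\omega\cap A'$ versus the paper's direct use of the well-defined products $F(A,\CB_\omega)\cdot A\subset\CB_\omega$.
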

\begin{proof}
Let $N$ denote the normal subgroup of all elements $g\in\Gamma$ such that $\alpha_g$ is inner.
In order to verify the claim, let $e,f\in F(A,\CB_\omega)^{\tilde{\beta}_\omega}$ be two positive contractions such that $e$ is tracially null and $f$ is tracially supported at 1.
By \autoref{lem:precursor-equi-SI}, we can find a contraction $s\in F(A,\CB_\omega)$ having the property that
\begin{equation} \label{eq:input-s}
fs=s,\ s^*s=e, \text{ and } s^*\tilde{\beta}_{\omega,g}(s)=0 \text{ for all } g\in\Gamma\setminus N.
\end{equation}
Let $F=F^*\subset A$ and $K\subset\Gamma$ be finite subsets and $\eps>0$.
Since $\alpha$ is amenable, it follows from \autoref{lem:relative-QAP} that there exists a finitely supported function $\zeta\in c_{00}(\Gamma/N,A)$ satisfying
\begin{equation} \label{eq:amen-net}
\|\zeta\|_2\leq 1,\quad \|(\eins - \langle\zeta \mid \zeta \rangle) a\|\leq\eps,\quad \|a \zeta-\zeta a\|_2\leq\eps,\quad \|\bar{\alpha}_g(\zeta)-\zeta\|_2\leq\eps
\end{equation}
for all $a\in F$ and $g\in K$.
We consider the element $S\in\ell^\infty(\Gamma/N,\CB_\omega)$ given by $S(\bar{g})=\tilde{\beta}_{\omega,g}(s)$.
By the assumption on $N$, this is well defined.
We define a contraction $s_0\in\CB_\omega$ via
\[
s_0 = \sum_{\bar{g}\in G/N} \tilde{\beta}_{\omega,g}(s)\zeta_{\bar{g}} .
\]
Since $e$ is fixed by $\tilde{\beta}_\omega$, the equation $s^*s=e$ implies $e=\tilde{\beta}_{\omega,g}(s^*s)$ for any $g\in\Gamma$.
We use this to compute for $a\in F$ that
\[
\begin{array}{ccl}
s_0^* a s_0 &=& \dst \sum_{\bar{g}_1,\bar{g_2}\in\Gamma/N} \zeta_{\bar{g}_1}^* \tilde{\beta}_{\omega,g_1}(s)^*a\tilde{\beta}_{\omega,g_2}(s) \zeta_{\bar{g}_2}  \\
&\stackrel{\eqref{eq:input-s}}{=}& \dst \sum_{\bar{g}\in\Gamma/N} \zeta_{\bar{g}}^* \tilde{\beta}_{\omega,g}(s^*s) a \zeta_{\bar{g}} \\
&=& \dst \sum_{\bar{g}\in\Gamma/N} \zeta_{\bar{g}}^* e a \zeta_{\bar{g}} \\
&=& \dst \langle\zeta\mid a\zeta\rangle e \ \stackrel{\eqref{eq:amen-net}}{=}_{\hspace{-2mm}2\eps} \ ae.
\end{array}
\]
Following part of the previous computation we also see that $s_0^*s_0\leq e$ upon inserting $\eins$ in place of $a$.
In particular, $s_0$ is a contraction.
Given that $f$ is fixed by $\tilde{\beta}_\omega$, the relation $fs=s$ implies $f\tilde{\beta}_{\omega,g}(s)=\tilde{\beta}_{\omega,g}(s)$ for all $g\in \Gamma$.
Since $\zeta$ takes values in $A$, this readily implies $fs_0=s_0$.
Moreover we compute for all $h\in K$ that
\[
\begin{array}{cl}
\multicolumn{2}{l}{ |s_0-\beta_{\omega,h}(s_0)|^2 }\\ 
=& \dst \Big| \sum_{\bar{g}\in\Gamma/N}  \tilde{\beta}_{\omega,g}(s) \zeta_{\bar{g}} -  \tilde{\beta}_{\omega,hg}(s) \alpha_h(\zeta_{\bar{g}}) \Big|^2 \\
=& \dst \Big| \sum_{\bar{g}\in\Gamma/N}  \tilde{\beta}_{\omega,g}(s) \big( \zeta_{\bar{g}} -  \alpha_h(\zeta_{\overline{h^{-1}g}}) \big) \Big|^2 \\
=& \dst \Big| \sum_{\bar{g}\in\Gamma/N}  \tilde{\beta}_{\omega,g}(s) \big( \zeta_{\bar{g}} -  \ \bar{\alpha}_h(\zeta)_{\bar{g}} \big) \Big|^2 \\
\stackrel{\eqref{eq:input-s}}{=}& \dst \sum_{\bar{g}\in\Gamma/N} \big( \zeta_{\bar{g}} -  \ \bar{\alpha}_h(\zeta)_{\bar{g}} \big)^* \tilde{\beta}_{\omega,g}(s^*s) \big( \zeta_{\bar{g}} -  \ \bar{\alpha}_h(\zeta)_{\bar{g}} \big) \\
=& \dst e\cdot \langle \zeta-\bar{\alpha}_h(\zeta) \mid \zeta-\bar{\alpha}_h(\zeta) \rangle \ \leq \ \eps^2.
\end{array}
\]
This readily implies $\|s_0-\beta_{\omega,h}(s_0)\|\leq\eps$.

Since the triple $(F,K,\eps)$ was arbitrary, we can apply Kirchberg's $\eps$-test (\cite[Lemma 3.1]{KirchbergRordam14}) and find a contraction $s_1\in (\CB_\omega)^{\beta_\omega}$ with $s_1^*s_1\leq e$ such that $fs_1=s_1$ and $s_1^*as_1=ae$ for all $a\in A$.
For every $a\in A$ it follows that
\[
\begin{array}{ccl}
(as_1-s_1a)^*(as_1-s_1a) &=& s_1^* a^*a s_1 - a^* s_1^* a s_1 - s_1^* a^* s_1 a + a^* s_1^* s_1 a \\
&=& a^*ae - a^*ae-a^*ea+a^*s_1^*s_1a \\
&=& a^*(s_1^*s_1-e)a \leq 0.
\end{array}
\]
Hence $s_1$ commutes with $A$ and we conclude $s_1^*s_1-e\in\CB_\omega\cap A^\perp$.
The resulting element $s=s_1+(\CB_\omega\cap A^\perp)$ then defines a contraction in $F(A,\CB_\omega)^{\tilde{\beta}_\omega}$ obeying the equality $fs=s$ and $s^*s=e$.
This finishes the proof.
\end{proof}

\begin{corollary} \label{cor:equivariant-SI}
Let $\Gamma$ be a countable group.
Let $A$ be a non-elementary separable simple nuclear \cstar-algebra with local strict comparison.
Then $A$ has equivariant property (SI) relative to every amenable action $\Gamma\curvearrowright A$.
\end{corollary}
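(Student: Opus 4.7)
The plan is to reduce this corollary to a direct application of \autoref{thm:equi-SI} via the device of taking the constant sequence. Given an amenable action $\alpha: \Gamma\curvearrowright A$, I would set $B_n=A$ and $\beta_n=\alpha$ for every $n\in\IN$, so that $\CB_\omega=A_\omega$, $\beta_\omega=\alpha_\omega$, and $A\subset\CB_\omega$ is the canonical equivariant inclusion. The running assumption that $A$ has local strict comparison is at hand, and since property (SI) relative to $\alpha$ is by definition the assertion that $A\subset A_\omega$ has property (SI) relative to $\alpha_\omega$, the only remaining task is to verify the compatibility condition of \autoref{thm:equi-SI}: whenever $\alpha_g$ is inner on $A$, the induced automorphism $\tilde{\alpha}_{\omega,g}$ on $F(A,A_\omega)$ is trivial.

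To see this I would argue directly from the definition of $F(A,A_\omega)$. Suppose $\alpha_g=\mathrm{Ad}(u)$ for some unitary $u\in M(A)$, and let $x\in A_\omega\cap A'$. For any $a\in A$ one has $u^*a\in A$, and since $x$ commutes with $A$ elementwise we may compute
\[
(uxu^*-x)a \;=\; u\bigl(x(u^*a)\bigr) - xa \;=\; u(u^*a)x - xa \;=\; ax - xa \;=\; 0.
\]
A symmetric calculation yields $a(uxu^*-x)=0$, so $\alpha_{\omega,g}(x)-x$ lies in $A_\omega\cap A^\perp$. This means $\tilde{\alpha}_{\omega,g}$ acts as the identity on the quotient $F(A,A_\omega)$, which is exactly the hypothesis demanded by \autoref{thm:equi-SI}.

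The substantive work having already been carried out in \autoref{thm:equi-SI} (and, upstream, in \autoref{lem:relative-QAP} and \autoref{lem:precursor-equi-SI}), there is no real obstacle in this final reduction: nuclearity and non-elementarity of $A$ and amenability of $\alpha$ are all hypotheses of the corollary, and the inner-automorphism verification above is straightforward. Invoking \autoref{thm:equi-SI} therefore yields the conclusion.
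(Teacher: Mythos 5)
Your proposal is correct and follows the paper's proof exactly: apply \autoref{thm:equi-SI} with $B_n=A$, $\beta_n=\alpha$ and the canonical inclusion $A\subset A_\omega$, and check that inner automorphisms act trivially on $F_\omega(A)$. The only difference is that you carry out this last verification by a direct computation, whereas the paper cites it to \cite[Remark 1.8]{Szabo18ssa}; your computation is valid (with the standard convention that ``inner'' means $\mathrm{Ad}(u)$ for a unitary $u$ in the multiplier algebra).
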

\begin{proof}
This follows from \autoref{thm:equi-SI} applied to $B_n=A$, $\beta_n=\alpha$ and the canonical inclusion $A\subset A_\omega$.
The extra assumption about the inclusion holds automatically:
If $\alpha_g$ is inner, then $\tilde{\alpha}_\omega$ on $F_\omega(A)$ is trivial; see \cite[Remark 1.8]{Szabo18ssa}.
\end{proof}

In complete analogy to the case of actions of amenable groups, we can deduce consequences related to the equivariant absorption of strongly self-absorbing \cstar-algebras.

\begin{corollary}\label{cor:Kirchberg}
Let $A$ be a Kirchberg algebra and $\Gamma$ a countable group.
Then every amenable $\Gamma$-action on $A$ is equivariantly $\CO_\infty$-stable.
\end{corollary}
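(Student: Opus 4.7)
The first step is to verify that \autoref{cor:equivariant-SI} applies. A Kirchberg algebra $A$ is, by definition, separable, simple, nuclear and purely infinite, in particular non-elementary and traceless; and by \autoref{rem:traceless-comparison} it has local strict comparison. Hence every amenable $\Gamma$-action on $A$ enjoys equivariant property (SI).

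Next, I would invoke the now-standard equivariant strongly-self-absorbing machinery (see e.g.\ \cite{Szabo18ssa}): $\alpha$ is equivariantly $\CO_\infty$-stable if and only if there exists a unital $*$-homomorphism $\CO_\infty \to F_\omega(A)^{\tilde{\alpha}_\omega}$, where $\CO_\infty$ carries the trivial $\Gamma$-action. In the traceless setting the conditions of \autoref{def:null-full} simplify drastically: every positive element in $F_\omega(A)$ is tracially null (the relevant suprema in \autoref{def:generalized-limit-trace} are vacuous), and by simplicity of $A$ any positive contraction in $F_\omega(A)$ of quotient norm $1$ is tracially supported at $1$. Equivariant SI therefore guarantees: for every positive contraction $e \in F_\omega(A)^{\tilde{\alpha}_\omega}$ and every norm-one positive contraction $f \in F_\omega(A)^{\tilde{\alpha}_\omega}$, there exists $s \in F_\omega(A)^{\tilde{\alpha}_\omega}$ with $fs = s$ and $s^*s = e$.

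The plan is then to iteratively construct isometries $s_n \in F_\omega(A)^{\tilde{\alpha}_\omega}$ with pairwise orthogonal range projections by applying SI with $e_n = \eins$ and $f_n = \eins - \sum_{i<n} s_i s_i^*$ at each stage; the equations $f_n s_n = s_n$ and $s_n^* s_n = \eins$ force the $s_n$ to be isometries whose ranges are orthogonal to all previous ones, so that the family $\{s_n\}$ assembles into a unital embedding of $\CO_\infty$. The main technical obstacle I expect is guaranteeing that the induction never terminates, i.e.\ that $f_n \neq 0$ in $F_\omega(A)^{\tilde{\alpha}_\omega}$ at each stage, equivalently that the unit of $F_\omega(A)^{\tilde{\alpha}_\omega}$ is properly infinite. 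This should follow by combining the non-equivariant $\CO_\infty$-absorption of $F_\omega(A)$ (coming from the Kirchberg algebra hypothesis on $A$) with an averaging argument based on equivariant SI itself. Indeed, the amenable-group version of this corollary, as developed in \cite{Szabo19ssa4, Szabo21si}, used amenability of $\Gamma$ only in order to obtain equivariant SI, so the argument there should transfer verbatim to amenable actions of arbitrary countable groups once \autoref{cor:equivariant-SI} is in hand.
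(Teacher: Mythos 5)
Your first step is fine, and the simplifications of \autoref{def:null-full} in the traceless case are essentially correct (in particular every positive element of $F_\omega(A)$ is tracially null). The genuine gap is in the inductive construction of the isometries $s_n$. Applying property (SI) with $e=f=\eins$ only produces an isometry $s_0\in F_\omega(A)^{\tilde{\alpha}_\omega}$, and nothing rules out that $s_0$ is a unitary; then $f_1=\eins-s_0s_0^*$ may vanish and the induction terminates before it begins. To continue, you would need an element $f\neq\eins$ of the fixed point algebra that is still tracially supported at $1$, i.e.\ satisfies $\|fa\|=\|a\|$ for all $a\in A_+$, and exhibiting such an element is precisely the assertion that the unit of $F_\omega(A)^{\tilde{\alpha}_\omega}$ is properly infinite --- the very point at issue. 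Your proposed fix, averaging a non-equivariant isometry of $F_\omega(A)$ ``based on equivariant SI itself'', is therefore circular as stated: the only averaging mechanism available to you is the (SI) relation, whose hypothesis on $f$ is what you cannot yet verify.

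The paper circumvents this with a matrix amplification that your proposal misses, and this is the reason \autoref{thm:equi-SI} is formulated for general inclusions $A\subset\CB_\omega$ rather than only for the canonical inclusion $A\subset A_\omega$: one applies \autoref{thm:equi-SI} to the equivariant diagonal embedding of $(A,\alpha)$ into $(M_2(A)_\omega,\alpha^{(2)}_\omega)$ with $\alpha^{(2)}=\id\otimes\alpha$. The scalar matrix units commute with the diagonal copy of $A$ and are fixed by the induced action, so $F(A,M_2(A)_\omega)^{\tilde{\alpha}^{(2)}_\omega}$ contains a unital copy of $M_2$; the projection $e_{11}$ therein satisfies $\|e_{11}\operatorname{diag}(a,a)\|=\|a\|$ and is hence tracially supported at $1$, while visibly $e_{11}\neq\eins$. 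Relative (SI) applied to $f=e_{11}$ and $e=\eins$ then yields a fixed isometry whose range projection is dominated by $e_{11}$, which supplies exactly the proper infiniteness your induction needs; from there one follows the proof of \cite[Theorem 5.1]{Szabo21si} word for word. So the statement is true and the amenable-group argument does transfer, but not ``verbatim'' from \autoref{cor:equivariant-SI}: the essential input is the relative form of equivariant (SI) for the $M_2$-amplified inclusion, not property (SI) for $A\subset A_\omega$ alone.
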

\begin{proof}
In light of \autoref{rem:traceless-comparison}, $A$ is nothing but a traceless separable simple nuclear \cstar-algebras with (local) strict comparison.
Let $\alpha:\Gamma\curvearrowright A$ be an arbitrary action.
We consider the matrix amplification $\alpha^{(2)}=\id\otimes\alpha: \Gamma\curvearrowright M_2\otimes A=M_2(A)$ and the equivariant diagonal inclusion
\[
(A,\alpha) \to (M_2(A)_\omega,\alpha^{(2)}_\omega),\quad a\mapsto \left(\begin{matrix} a & 0 \\ 0 & a \end{matrix}\right).
\]
Then this inclusion satisfies the assumption in \autoref{thm:equi-SI} and thus has equivariant property (SI) relative to $\alpha^{(2)}_\omega$.
In order to end up with a proof of the claim, one can from this point follow the proof of \cite[Theorem 5.1]{Szabo21si} word for word.
\end{proof}

\begin{definition}[see {\cite[Definition 4.5]{Szabo21si} and \cite[Definition 1.11]{SzaboWouters23-2}}]
Let $A$ be a separable simple nuclear \cstar-algebra with a non-trivial trace.
We define the trace-kernel ideal $\CJ_A$ inside $F_\omega(A)$ as those elements $x$ such that $x^*x$ is tracially null in the sense of \autoref{def:null-full}.
The quotient $A^\omega\cap A' = F_\omega(A)/\CJ_A$ is called the (uniform) tracial central sequence algebra.
If $\alpha: \Gamma\curvearrowright A$ is an action of a group, then $\CJ_A$ is clearly $\tilde{\alpha}_\omega$-invariant, which allows us to consider the induced action $\alpha^\omega: \Gamma\curvearrowright A^\omega\cap A'$.
\end{definition}

Our final application of the main result is a tracial characterization of equivariant Jiang--Su stability, which extends a known characterization for amenable acting groups:

\begin{corollary} \label{cor:equi-Z-stab}
Let $A$ be a non-elementary separable simple nuclear \cstar-algebra with local strict comparison.
Let $\alpha: \Gamma\curvearrowright A$ be an amenable action of a countable discrete group.
Then $\alpha$ is equivariantly $\CZ$-stable if and only if for every $n\geq 2$, there exists a unital $*$-homomorphism $M_n\to (A^\omega\cap A')^{\alpha^\omega}$.
\end{corollary}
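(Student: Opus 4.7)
The plan is to combine the equivariant property (SI) that has just been established in \autoref{cor:equivariant-SI} with the standard Matui--Sato scheme in order to promote the hypothesized tracial matricial structure on $(A^\omega\cap A')^{\alpha^\omega}$ to an equivariant unital $*$-homomorphism $\CZ \to F_\omega(A)^{\tilde{\alpha}_\omega}$, which is the by-now well-known characterization of equivariant $\CZ$-stability for separable \cstar-dynamics.

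The easy direction goes as follows. If $\alpha$ is equivariantly $\CZ$-stable, then there is a unital equivariant $*$-homomorphism from $\CZ$ (with trivial $\Gamma$-action) into $F_\omega(A)^{\tilde{\alpha}_\omega}$, which descends to a unital $*$-homomorphism $\CZ \to (A^\omega\cap A')^{\alpha^\omega}$. Since $\CZ\otimes\CZ\cong\CZ$, the central sequence algebra $F_\omega(\CZ)$ contains a unital copy of $M_n$ for every $n\geq 2$, and pre-composition produces the required unital $*$-homomorphism $M_n\to(A^\omega\cap A')^{\alpha^\omega}$.

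For the harder direction, assume the matricial condition. By \autoref{cor:equivariant-SI}, $\alpha$ has equivariant property (SI). Fix $n\geq 2$ and let $\phi: M_n\to (A^\omega\cap A')^{\alpha^\omega}$ be a unital $*$-homomorphism. Using projectivity of the cone $C_0((0,1])\otimes M_n$ together with a reindexing argument (valid since $A$ is separable and $\Gamma$ is countable), lift $\phi$ to an equivariant c.p.c.\ order-zero map $\psi: M_n \to F_\omega(A)^{\tilde{\alpha}_\omega}$. Then $\psi(\eins_{M_n})$ is tracially supported at $1$, while its tracial complement $f := \eins-\psi(\eins_{M_n})$ is tracially null and both elements are $\tilde{\alpha}_\omega$-fixed. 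Equivariant (SI) yields a contraction $s\in F_\omega(A)^{\tilde{\alpha}_\omega}$ with $\psi(\eins_{M_n})s=s$ and $s^*s=f$, which lets one modify $\psi$ into a unital equivariant c.p.c.\ order-zero map $M_n\to F_\omega(A)^{\tilde{\alpha}_\omega}$. Applying this construction for compatible pairs $(n,n+1)$ and assembling the two equivariant order-zero maps into an equivariant unital $*$-homomorphism from the prime dimension-drop algebra $I(n,n+1)$, then using a standard reindexing to pass through the inductive limit defining $\CZ$, produces the desired equivariant unital $*$-homomorphism $\CZ \to F_\omega(A)^{\tilde{\alpha}_\omega}$.

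The main obstacle, as always in this type of result, is to propagate equivariance through the order-zero lift, the (SI) correction, and the assembly of dimension-drop data. Each of these steps has been worked out in detail for amenable acting groups in \cite{Szabo21si} and \cite{SzaboWouters23-2}, and the only input from amenability in those arguments that is not automatic in our setting is equivariant property (SI), which is now covered by \autoref{cor:equivariant-SI}. I would therefore expect the proof to amount to a verbatim transcription of the argument given in \cite[Theorem for equivariant $\CZ$-stability]{Szabo21si}, with our strengthened (SI) input plugged in.
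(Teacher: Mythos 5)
Your approach is essentially the paper's: the proof given there consists of nothing more than citing \autoref{cor:equivariant-SI} together with \cite[Theorem 5.4]{SzaboWouters23-2}, and your sketch is an unwinding of the Matui--Sato machinery behind that cited theorem, correctly identifying equivariant property (SI) as the only input that is not already available for arbitrary countable $\Gamma$. Two small corrections to your sketch, though: first, $F_\omega(\CZ)$ does \emph{not} contain a unital copy of $M_n$ for any $n\geq 2$ (such an embedding would, by a standard reindexing--intertwining argument, force $\CZ\cong\CZ\otimes M_{n^\infty}$, contradicting $K_0(\CZ)\cong\IZ$); the correct intermediate object for the easy direction is the uniform tracial central sequence algebra of $\CZ$, which does admit unital copies of $M_n$, and this suffices because your target $(A^\omega\cap A')^{\alpha^\omega}$ is itself a tracial quotient. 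Second, in the (SI) step one should take the tracially supported element to be $\psi(e_{11})$ (supported at $1$ with constant $\kappa=1/n$) rather than $\psi(\eins_{M_n})$, since the Rørdam--Winter relations for a unital copy of the dimension-drop algebra require $s$ to sit under a single diagonal corner of the order-zero map; both points are handled in the cited references, so neither affects the validity of the overall route.
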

\begin{proof}
This follows directly from \autoref{cor:equivariant-SI} and \cite[Theorem 5.4]{SzaboWouters23-2}.
\end{proof}


\bibliographystyle{gabor}
\bibliography{master}
\end{document}